\documentclass[12pt, reqno]{amsart}
\usepackage{}
\usepackage{amsmath, amstext, amsbsy, amssymb, amscd}
\usepackage{amsmath}

\usepackage{bbm}%数学符号字体的设置
\usepackage{dsfont}%数学符号字体的设置
\usepackage{yfonts}%数学符号字体的设置
\usepackage{amsmath}
\usepackage{amssymb}
\usepackage{amsfonts}
\usepackage{mathrsfs}
\usepackage{amsxtra}
\usepackage{amscd}
\usepackage{amsthm}
\usepackage{amsfonts}
\usepackage{amssymb}
\usepackage{eucal}
\usepackage{color}
\usepackage{mathrsfs}
\usepackage[all]{xy}
\usepackage[CJKbookmarks=true]{hyperref}
\usepackage{soul}
\usepackage{shorttoc}
%\usepackage[small,nohug,heads=vee]{diagrams}
%\diagramstyle[labelstyle=\scriptstyle]

%\include{micro}
%\setcounter{footnote}{*}

\setlength{\textheight}{8.6in} \setlength{\textwidth}{35pc}
\setlength{\topmargin}{-0.1in} \setlength{\footskip}{0.2in}
\setlength{\oddsidemargin}{.573125pc}
\setlength{\evensidemargin}{\oddsidemargin}

{\vskip-\lastskip\medskip
  \noindent
  {\em #1.}\enspace
  }%
{\qed\par\medskip
  }
\title{Convolution algebra  of diagram automorphism fixed quiver variety}

\newtheorem{theorem}{Theorem}
\newtheorem{example}{Example}

\newenvironment{psmallmatrix}
  {\left(\begin{smallmatrix}}
  {\end{smallmatrix}\right)}
\newcommand{\BB}{B^{\prime}}
\newcommand{\II}{I^{\prime}}
\newcommand{\JJ}{J^{\prime}}

\newcommand{\cM}{\mathfrak{M}}

\usepackage{tikz}

\begin{document}

\author[Z. Dong, H. Ma ]{Zhijie Dong, Haitao Ma }
\address{Institute for Advanced Study in Mathematics of HIT, Harbin, 150001, China}
\address{College of mathematics science, Harbin Engineering University, Harbin, 150001, China.}
\email{dongmouren@gmail.com (Zhijie Dong)}
\email{hmamath@hrbeu.edu.cn (Haitao Ma)}

\maketitle
\begin{abstract}
We study the convolution algebra $H_{*}(Z^{\theta}_{W})$ of homology on diagram automorphism fixed point quiver variety and prove that there exists an algebra homomorphism from the universal enveloping algebra of the diagram automorphism fixed algebra of the split quiver to $H_{*}(Z^{\theta}_{W})$.
\end{abstract}

\section{Introduction}

For a quiver $Q$, Nakajima \cite{nakajima1994instantons,nakajima1998quiver} attached to it 
varieties $\cM(V,W)$ and $\mathfrak{L}(V,W)$ which encode representation meaning of 
Lie algebra $\mathfrak{g}$ associated to $Q$. Denote by $Z_W$ the Steinberg type variety. It is proved that there is a Lie algebra action on the sum of top degree homology groups on $\mathfrak{L}(V,W)$ over $V$ through $Z_W$.
 $$
\mathcal{U}(\mathfrak{g})\rightarrow H^{top}_{*}(Z_W) \curvearrowright \bigoplus_{V}H^{top}_{*}(\mathfrak{L}(V,W)).
$$
If we add an automorphism to $\cM(V,W)$, what is the algebra to replace $\mathcal{U}(\mathbf{g})$?
What is the case for the total homology and $K$-theory version?

In the paper \cite{li2019quiver}, it is conjectured that for a certain type automorphism $\sigma$ on quiver variety, 
there is a corresponding automorphism on the Lie algebra $\mathfrak{g}$, which we also denote by $\sigma$, such that
$$ 
\mathcal{U}(\mathfrak{g}^\sigma)\rightarrow H^{top}_{*}(Z_{W}^{\sigma}). $$ 
%\curvearrowright \bigoplus H^{top}_{*}(\mathfrak{L}^{\sigma}).

In the paper \cite{fan2019equivariant}, the $K$-theory version for a special case of $\sigma$-quiver variety, the flag variety of type B/C, was studied. This type of automorphism $\sigma$ corresponds to symmetric 
pairs. In the paper\cite{ZM}, the authors also studied the total homology version.
 It is expected that the quantum affine algebra (Yangian) for symmetric 
pairs maps to the $K$-theory (homology) convolution algebra on $Z_{W}^{\sigma}$. 
The difficulty for proving this is that the quantum affine algebra (Yangian) for symmetric 
pairs does not have Drinfeld new realization yet.

Compared to this case,
there is another automorphism on the Lie algebra, the diagram
automorphism, which was studied extensively \cite{kac1990infinite,carter2005lie}. The corresponding affine version (twisted affine algebra) and quantization (twisted quantum group and Yangian) have Drinfeld new realizations.  
This motivates us to study the following question.

What is the geometry that realizes the
diagram automorphism fixed point Lie algebra and its cousins (affine version, quantization, etc)?
A natural guess of the variety is the diagram automorphism $\theta$ fixed point subvariety of quiver variety.
In \cite{henderson2014diagram}, it is shown that $\cM^{\theta}(V,W)$ for quiver $Q$ is the disjoint union of $\cM(V^{\prime},W^{\prime})$ over certain $V^{\prime}$ of the split-quotient quiver $s(Q)$. We add prime when we refer to objects associated to the split quotient quiver if not particularly mentioned.
 
We have [Theorem 6]$$
\mathcal{U}(\mathfrak{g}_{s(Q)}) \xrightarrow[]{} H_{*}((Z_W)_{s(Q)}) \xrightarrow[]{} H_{*}((Z_{W})^{\theta}_{Q}).$$
%so we already have an answer to the above question.
 Moreover, the quiver $s(Q)$ has a natural diagram automorphism $a^{\prime}$ and we proved that the enveloping algebra of the $a^{\prime}$ fixed point subalgebra of $\mathfrak{g}_{s(Q)}$ maps to the subalgebra generated by the $\theta$-Hecke correspondences
of $\cM(V,W)^{\theta}$.

%satisfy:
%$$\mathcal{B}^{\prime}_i=\mathcal{B}_i, i\neq \sigma(i).$$
%$$
%\mathcal{B}^{\prime}_i=\mathcal{B}_{(i,1)} \bigcup \cdots \bigcup \mathcal{B}_{(i,n)},$$
%where $n$ is the order of the diagram automorphism $\sigma$.
%We have 
%$$\mathcal{U}(\mathfrak{g}_{s(Q)}^{\sigma})\hookrightarrow 
%\mathcal{U}(\mathfrak{g}_{s(Q)}) %\xrightarrow[]{} H_{*}(Z_{s(Q)}) %\xrightarrow[]{} H_{*}(Z^{\theta}_{Q}).$$
%such that the generators $e_i$ of %$\mathfrak{g}_{s(Q)}$ map to $[\mathcal{B}_i]$ in $\cM(V,W)$ for any $i$. Here the split quotient again has an automorphism which we still denote by $\sigma$ and we denote [] the fundamental class. When $X$ is disjoint union of $X_i$, $[X]$ is defined to be $[X_1]+\cdots +[X_n]$ in $H_{*}(X)$.
In particular, when $Q$ is of (affine) type $A$ (resp. $D$), the split-quotient quiver $s(Q)$ is (affine) type $D$ (resp. $A$) and $\mathfrak{g}^{a^{\prime}}_{s(Q)}$ is of (affine) type $B$ (resp. $C$).
%The algebra that maps to the subalgebra generated by $\mathcal{B}_i$ is $\mathfrak{g}_{s(Q)}$, the  diagram automorphism fixed subalgebra of the Lie algebra of the split quotient quiver.   

We want to understand the module structure of $\bigoplus_{V} H^{top}_{*}(\mathfrak{L}(V,W))^{\theta}$ as $\mathfrak{g}^{a^{\prime}}_{s(Q)}$-algebra. As an $\mathfrak{g}_{s(Q)}$ algebra, It is the highest weight module of highest weight $\mathbf{w}^{\prime}$.
So this is purely a branching problem. Unfortunately,  since our variety 
is not new, the geometry does not help us with this problem.

%section 2: diagram automorphism quiver variety
%section 3: convolution algebra
%section 4: hecke correspondence 
%section 5: branching problem 
\section{Diagram automorphism quiver variety}
We recall the definition and the main theory of the diagram automorphism quiver variety in \cite{henderson2014diagram}.
Let $Q=\{I,E\}$ be a quiver, 
where $I$ is the set of vertices and $E$ is the set of edges.

For an edge $a=i\xrightarrow[]{}j$, define $s(a)=i,t(a)=j$, let $a^*=j\xrightarrow[]{}i$ and let
  $E^*=\{a^* | a\in E\}$. Define $\epsilon(a)=1 $ when $a\in E$, $\epsilon(a)=-1$ when $a\in E^*$.
  Denote the doubled quiver by $\bar{Q}$.
Define the framed double quiver by $fQ$  doubling the vertex set $I$ and add arrows between $(i,0)$ and $(i,1)$ for $i\in I$.

A $\Pi_{fQ}-$module is the data of two  $I$-graded vector spaces $\bigoplus_{i\in I} V_i$ and $\bigoplus_{i\in I}W_i$ and linear maps $ V_{s(h)} \xrightarrow[]{B_h} V_{t(h)},  V_i\xrightarrow[]{I_i} W_i,  W_i\xrightarrow[]{J_i} V_i$ for each $h\in E\bigsqcup E^{*},i\in I$ satisfying $\sum\limits_{h|s(h)=i}B_{\overline{h}}B_{h}+I_iJ_i=0$ for any $i\in I$.
Let $G_V=\Pi_{i\in I} GL_{V_i}$ act on the set of all $\Pi_{fQ}-$module by conjugation.
For a $\Pi_{fQ}-$module $M=(B,I,J)$, we say $M$ is stable if there is no nonzero $k\overline{Q}$-submodule contained in $kerJ$.
For $I$-graded vector space $V$ (resp. $W$) of fixed dimension vectors $\mathbf{v}$ (resp. $\mathbf{w}$), denote the set of all  $\Pi_{fQ}$-modules by $M(V,W)$ and the set of all stable modules by $M^{s}(V,W)$. By  \cite[Lemma 3.10]{nakajima1998quiver}, the stabilizer for stable module is identity. 
Quiver variety $\cM(V,W)$ is the geometric quotient $M^{s}(V,W)/G_V$
and $\cM_0(V,W)$ is the GIT quotient whose geometric point is the close $G_V$-orbit.  There is a map  $\cM(V,W)\xrightarrow{\pi} \cM_0(V,W) $ sending a stable module to its semisimplification, which is the unique close orbit in its $G_V$-orbit closure.
%An automorphism $a$ on the the quiver will induce automorphisms $\theta$ on the quiver variety \cite{henderson2014diagram}.
%We emphasize the part purely caused by diagram automorphism and in this sense  $\theta$ is uniquely determined by $a$.
An automorphism $a$ on quiver $Q=(I,E)$ is an automorphism of set $I$ and $E$, such that $t,s$ commute with $a$.
We say $a$ is admissible if vertices in each orbit are disconnected.
\begin{example}
Let $Q$ be of type $A_{2n-1}$ and $a(i)=2n-i$. This is admissible. Let $Q$ be $A_{2n}$ and $a(i)=2n-i.$ This is not admissible. 
Let $Q$ be of type $D_n$ and $a(i)=i$ for 
$i\leq n-1$, $a(n-1)=n,\ a(n)=n-1.$ This is admissible.
\end{example}
We now define the action $\theta$ on the quiver variety $\cM(V,W)$ (resp. $\cM_{0}(V,W)$). %In [\cite{henderson2014diagram}], it is not uniquely determined by $a$ but in this paper we consider the one purely caused by . This amounts to taking the least common multiple for $n$. 

%We do not recall their definition since we focus on these two examples only.
First, we define the action $\theta$ on $M(V,W)$.  
%\footnote{In $\sigma$ variety case, the stability condition also admits an involution}
 %
 %We want to tell under $a$, what does the map $V_{s(h)}\xrightarrow{B_{h}} V_{t(h)}$ go?
%We want the vertice and edges to be changed under the map $a$, meaning we want a map
%from $V_{s(a(h))}\xrightarrow{B_{a(h)}} V_{t(a(h))}$
%and also $V_i\xrightarrow{J_i} W_i$ and $W_i\xrightarrow{I_i} V_i$.
For this, we introduce the split quotient quiver $s(Q)$. 
Since we always double the quiver when define quiver variety so we only have to define $s(Q)$ as a diagram.
Given $Q=(I,E)$, let $d_h$, $d_i$ be the size of orbit $\langle a \rangle h$, $\langle a \rangle i$, where $\langle a \rangle$ is the group generated by $a$. Let $n$ be the least common multiple of all $d_h$, $d_i$ where $h\in E$, $ i\in I$ and $e_i=n/d_i, e_h=n/d_h$.
Define the quotient quiver $q(Q)=(q(I),q(E))$ where the set of vertices $q(I)$ consist of  $\langle a \rangle$-orbits of $I$ and
$q(E)$ consist of $\langle a \rangle$ orbits of $E$.
Define the split quotient quiver consisting of vertices  $I^{\prime}=\{(\bar{i},j/e_i), i\in q(I), 1\leq j\leq e_i\}$
and edges between $(\bar{i_1},j_1/e_{i_1})$ and $(\bar{i_2},j_2/e_{i_2})$ 
 whenever
 $e_{i_2}j_1/e_{i_1}=e_{i_1}j_2/e_{i_2}\ (mod\ 1)$. 
The split quotient quiver $s(Q)$ also has an automorphism $a^{\prime}$ where  $a^{\prime}(\bar{i},j/e_i)=(\bar{i},(j+1)/e_i\ (mod \ 1))$
and $s(s(Q))=Q$.

To define $\theta$,
we need isomorphisms $V_i\xrightarrow{\phi_i}V_{a(i)}$ and $W_i\xrightarrow{\sigma_i}W_{a(i)}$.
Since we finally want to define it on the $G_V$-orbit we can always assume $\phi_i=id$ but
we only pose the weaker condition that $( \sigma_{a^{d_i-1}(i)}\cdots \sigma_i)^{e_{i}}=id$.

Then the induced map is $\theta(B_{a(h)})=B_{h}$ and $\theta(J_{a(i)})=\sigma_{i}J_{i},\  
\theta(I_{a(i)})=I_i\sigma_i$.
% in their paper, use $\theta(B_{(h)})=B_{a^{-1}(h)}$ etc. our one is more natural in practice.
The morphism $\theta$ clearly descents to a morphism on $\cM(V,W)$, $\cM_0(V,W)$ and satisfies $\theta^{n}=id$.
It is clear to define $\theta$, the dimension of $V$ and $W$ satisfy $\mathbf{v}_i=\mathbf{v}_{a(i)}$ and $\mathbf{w}_i=\mathbf{w}_{a(i)}$, which we will implicitly assume.
Denote $\cM(V,W)^{\theta}$(resp. $\cM_{0}(V,W)^{\theta}$) the fixed point subvariety of $\cM(V,W)$ (resp. $\cM_{0}(V,W)$).
Here are some examples.
If the original quiver is\\
\centerline{
\xymatrix{ &&& n \\
1 \ar@{-}[r]&2 \cdots \cdots n-2 \ar@{-}[r]&   n-1 \ar@{-}[ru]
\ar@{-}[rd]\\ &&& n+1
}}\\
where $a$ fixed i for $i\leq n-1$ and $a(n-1)=n,\ a(n)=n-1$. The split quotient of $Q$ is 
\\
\centerline{
\xymatrix{  (1,1/2)\ar@{-}[r]&(2,1/2) \cdots \cdots (n-2,1/2) \ar@{-}[r] &(n-1,1/2) \ar@{-}[rd]&\\
&&& n\\
(1,2/2)\ar@{-}[r]&(2,2/2) \cdots \cdots(n-2,2/2) \ar@{-}[r] & (n-1,2/2)\ar@{-}[ru]&
.}}\\
We see that $Q$ and $s(Q)$ are of type $D_{n+1}$ and $A_{2n-1}$.
%%\begin{example}
%%$D_4$ goes to itself.
%%\end{example}

%%\xymatrix{ \bullet \ar@{-}[rd]&&&&\bullet \\
%&\bullet \ar@{-}[r]&\bullet \cdots \cdots \bullet \ar@{-}[r]&  \bullet \ar@{-}[ru]
%\ar@{-}[rd]\\ \bullet \ar@{-}[ru]&&&&\bullet 
%}
%\xymatrix{
%&\bullet \ar@{-}[r]&\bullet \cdots \cdots \bullet \ar@{-}[r] &\bullet \ar@{-}[rd]&\\
%\bullet \ar@{-}[ur] \ar@{-}[dr]&&&&\bullet\\
%&\bullet \ar@{-}[r]&\bullet \cdots \cdots \bullet \ar@{-}[r] &\bullet \ar@{-}[ru]&
%}

\begin{theorem}\cite[theorem 3.9]{henderson2014diagram} 
%Case 1 $A_{2n-1}$: 
%$$\cM(V,W)^{\theta}=\bigcup_{M^{\prime}_n+M^{\prime}_{n+1}=M_n} \cM(V^{\prime},W^{\prime})$$
%Where if we write $M=(M_1,M_2,\cdots ,M_{n-1},M_n,M_{n+1},\cdots,M_{2n-1})$ 
%$W=(W_1,W_2,\cdots ,W_{n-1},W_n,W_{n+1},\cdots,W_{2n-1})$ satisfying $M_i=M_{2n-i}$ and $W_i=W_{2n-i}$.
%$M^{\prime}=(M_1,\cdots, M^{\prime}_n,M^{\prime}_{n+1})$
%and $W^{\prime}=(W_1,\cdots, W^{\prime}_n,W^{\prime}_{n+1})$. where $W^{\prime}_n,W^{\prime}_{n+1}$ is the dimension of +1,-1 eigenspace of $\sigma$ on $W^{n}$.
%Case 2 $D_{n+1}

Given $\sigma_{a^{d_i-1}(i)}\cdots \sigma_i$ action on $W_i$, we have eigenspace decomposition $W_i=\oplus_{1\leq j\leq e_i} (W_i)_{j}$. We get an
$I^{\prime}$-graded space $W^{\prime}$ with $W_{\Bar{i},j}= (W_i)_{j}$ where $i\in I$ is any lift of $\Bar{i}$ in the quotient quiver.

Given $I^{\prime}$-graded space $V^{\prime}$ of $s(Q)$, we get a $I$-graded space, which we denote by $p(V^{\prime})$, such that $p(V^{\prime})_i=\oplus_{1\leq j \leq e_i}(V_{\bar{i},j/e_i})$. 

There is an isomorphism $\phi$:  
$$\bigsqcup \limits_{\mathbf{v}^{\prime}|p(\mathbf{v}^{\prime})=\mathbf{v}} \cM(V^{\prime},W^{\prime})\xrightarrow[]{\phi}\cM(V,W)^{\theta},$$
where the union is over all dimension vectors $\mathbf{v^{\prime}}$ such that
$p(\mathbf{v^{\prime}})=\mathbf{v}$.
\end{theorem}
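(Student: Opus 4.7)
The plan is to construct $\phi$ and an inverse at the level of stable $\Pi_{fQ}$-modules, check compatibility with the preprojective relations and with stability, and then descend both constructions to the GIT quotient.

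Given $(B', I', J') \in M^{s}(V', W')$ with $p(\mathbf{v}') = \mathbf{v}$, identify $V_i$ with $\bigoplus_{1 \le j \le e_i} V'_{\bar i, j/e_i} = p(V')_i$. For each $h \in E \sqcup E^{*}$, the arrows of $s(Q)$ lying over $h$ pair summands $V'_{s(h), j_1/e_{s(h)}}$ and $V'_{t(h), j_2/e_{t(h)}}$ exactly when $e_{t(h)} j_1 / e_{s(h)} \equiv e_{s(h)} j_2 / e_{t(h)} \pmod{1}$; collect the corresponding $B'$-blocks into a single block map $B_h$, and define $I_i, J_i$ block-diagonally from the $I'_{\bar i, j/e_i}$ and $J'_{\bar i, j/e_i}$. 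The preprojective relation on $\bar Q$ reduces summand-wise to that of $s(Q)$, stability transfers, and the element $g \in G_V$ acting on $V'_{\bar i, j/e_i}$ by $\zeta_{e_i}^{j}$ (for a fixed primitive $e_i$-th root of unity $\zeta_{e_i}$) intertwines $\theta(B, I, J)$ with $(B, I, J)$, so the resulting orbit lies in $\cM(V,W)^{\theta}$.

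For the inverse, let $[M] \in \cM(V, W)^{\theta}$ have a stable representative $(B, I, J)$; by definition of the fixed-point set, there is $g \in G_V$ with $g \cdot \theta(B, I, J) = (B, I, J)$. The composition $(g \circ \theta)^{d_i}$ restricts to an endomorphism $\Phi_i$ of $V_i$. Using the hypothesis $(\sigma_{a^{d_i-1}(i)} \cdots \sigma_i)^{e_i} = \mathrm{id}$ together with stability, which by \cite[Lemma~3.10]{nakajima1998quiver} forces the module centralizer in $G_V$ to be trivial, and after normalizing $g$, one verifies $\Phi_i^{e_i} = \mathrm{id}$. The eigenspace decomposition $V_i = \bigoplus_j (V_i)_j$ then produces an $I'$-graded space $V'$ with $p(\mathbf{v}') = \mathbf{v}$, and the blocks of $B, I, J$ under this decomposition assemble into a stable $s(Q)$-module $(B', I', J')$: the edge condition defining $s(Q)$ is precisely the statement that $B_h$ sends an eigenspace for a given eigenvalue of $\Phi_{s(h)}$ into the correct eigenspace of $\Phi_{t(h)}$.

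\emph{Main obstacle.} The central technical point is establishing $\Phi_i^{e_i} = \mathrm{id}$ on the nose, so that diagonalization yields summands of well-defined dimensions matching a dimension vector $\mathbf{v}'$ with $p(\mathbf{v}') = \mathbf{v}$. This is where the hypothesis on $(\sigma_{a^{d_i-1}(i)} \cdots \sigma_i)^{e_i}$, the stability of $M$, and the admissibility of $a$ interact nontrivially. Once this identity is in place, $\phi$ and the constructed inverse are mutually inverse on points, and since $\Phi_i$ satisfies a fixed polynomial $X^{e_i} - 1 = 0$ its spectral projectors depend algebraically on the data, so $\phi^{-1}$ extends to a morphism of varieties. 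The remaining work is bookkeeping of the indices $(\bar i, j/e_i)$ to confirm that each construction lands in the correct connected component of the moduli.
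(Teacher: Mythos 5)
The paper does not prove this statement; it is quoted verbatim from Henderson--Licata \cite[Theorem 3.9]{henderson2014diagram}, so there is no internal proof to compare against. Your sketch follows what is in fact the standard line of argument there: build $\phi$ on stable modules by block assembly, and build the inverse by extracting from a fixed point an intertwiner $g$ with $g\cdot\theta(M)=M$, forming $\Phi_i=(g\circ\theta)^{d_i}|_{V_i}$, and decomposing into eigenspaces. Two places deserve tightening. First, the forward direction is not a naive block-diagonal assembly: the maps $B_h,I_i,J_i$ must be built with explicit roots of unity (and normalization factors) inserted into the blocks --- compare the paper's own displayed diagrams, where the assembled module carries entries $(B^{+},-B^{-})$, $\begin{psmallmatrix}A^{+}\\-A^{-}\end{psmallmatrix}$ and $2J^{\pm}$ --- and the claimed intertwiner $g$ acting by $\zeta_{e_i}^{j}$ has to be verified against that twisted form; your phrase ``collect the corresponding $B'$-blocks'' hides exactly the computation that makes $\theta$-fixedness work. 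Second, the identity $\Phi_i^{e_i}=\mathrm{id}$ does not require ``normalizing $g$'': the hypothesis $(\sigma_{a^{d_i-1}(i)}\cdots\sigma_i)^{e_i}=\mathrm{id}$ gives $\theta^{n}=\mathrm{id}$ on $M(V,W)$, so $(g\circ\theta)^{n}$ is an element of $G_V$ stabilizing the stable module $M$ and is therefore the identity by \cite[Lemma 3.10]{nakajima1998quiver}; restricting to $V_i$ yields $\Phi_i^{e_i}=\mathrm{id}$ directly, and the uniqueness of $g$ (again by trivial stabilizers) makes the whole construction canonical and hence well defined on orbits. With those two points made precise, your outline is a faithful reconstruction of the cited proof.
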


\section{Convolution}
We first recall convolution of quiver variety case.
We have $\cM(V,W)\xrightarrow[]{\pi} \cM_{0}(V,W)$ in each component. We want to take union over all $V$.
To write it as $X\times_{Y} X$ where $X$ is the union of $\cM(V,W)$ over $V$, we have to take $Y$ as some limit of $\cM_{0}(V,W)$. 
To avoid this, we write it as union of components where each of them is of Steinberg type.
For fixed $W$, denote the variety by $Z_W$ the disjoint union of $Z(V_1,V_2)$, where $Z(V_1,V_2)$ is the fiber product of 
$$\cM(V_1,W) \times_{\cM_{0}(V_1+V_2,W)} \cM(V_2,W).$$
The maps of both sides  are composition of $\pi$ and the map $\cM_0(V_i,W)\hookrightarrow \cM_0(V_1+V_2,W)$, where the latter maps take direct sum with the zero module.
%Remark:
%We review some basic facts. The point in $M_0$ is closed orbits which corresponds to sem
%isimple module. 

We use the notation $H_{*}$ for Borel-Moore homology
and when we say homology we mean Borel-Moore homology.
We summarize how $H_{*}(Z)$ is endowed with an algebraic structure, see \cite{chriss2009representation} for details.
Let $X_1,\  X_2,\  X_3$ be varieties. 
Let $Z_{12}$ be subvariety in $X_1\times X_2$ and $Z_{23}$ be subvariety in $X_2\times X_3$.
Denote the projection $X_1\times X_2 \times X_3  \xrightarrow{} X_1\times X_2$ by $\pi_{12}$ and $\pi_{23}$, $\pi_{13}$ similarly.
Given two homology classes $z_{12}\in H_{*}(Z_{12})$ and $z_{23}\in H_{*}(Z_{23})$, define the convolution product \\
$$z_{12}\bullet z_{23}=(\pi_{13})_{*}(\pi^{*}_{12}(z_{12})\cap \pi^{*}_{23}(z_{23})),$$
where lower star means pushforward which is defined for proper map and upper star means pullback which is defined for bundle map.
Define the set theoretic image of  $\pi_{12}(Z_{12})\cap \pi_{23}(Z_{23})$ under $\pi_{23}$  by $Z_{13}$.
We see that $z_{12}*z_{23}\in H_{*}(Z_{13})$ hence we have
$H_{*}(Z_{12})\times H_{*}(Z_{23})\xrightarrow{\bullet}H_{*}(Z_{13})$ and it can be shown that this convolution product is associative.
Given a proper map $X\xrightarrow{\pi} Y$, 
we set $X_1=X_2=X_3=X$ and $Z_{12}=Z_{23}=Z=X\times_{Y} X$.
We have $Z_{13}=Z$ and $H_{*}(Z)\times H_{*}(Z) \xrightarrow{\bullet} H_{*}(Z)$ so $H_{*}(Z)$ has an associative algebra structure.
If we set $X_1=X_2=X, \ X_3=pt$ , $Z_{12}=Z=X\times_{Y} X$, $Z_{23}=X\times_{Y} pt$, where $pt$ maps to $y\in Y$ in the second map of the fiber product, we have $Z_{13}=Z_{23}=\pi^{-1}(y)$ and the convolution $H_{*}(Z)\times H_{*}(\pi^{-1}(y)) \xrightarrow{\bullet} H_{*}(\pi^{-1}(y))$ gives $H_{*}(\pi^{-1}(y))$ a $H_{*}(Z)$ module.

The convolution product preserves degree of half dimension.
If $X\times_{Y} X$ is half dimension of $X\times X$ and $\pi^{-1}(y)$ is half dimension of $X$, $H^{top}_{*}(X\times_{Y} X)$ is an subalgebra of $H_{*}(X\times_{Y} X) $ and $H^{top}_{*}(\pi^{-1}(y))$ is a $H^{top}_{*}(X\times_{Y} X)$-module.

In this case $Z_W$ is disjoint union of $Z(V_1,V_2)$ and we can consider the  product in each component and half dimension means in each component.  It is proved in \cite{nakajima1998quiver} that $Z(V_1,V_2)$ is equidimensional of dimension half of dim$\cM(V_1,W)$+ dim$\cM(V_2,W)$, 
so  $\bigsqcup_{V} H^{top}_{*}(\mathfrak{L}(V,W))$ is a
$H^{top}_{*}(Z_W)$ module. In \cite{nakajima1998quiver}, define Hecke correspondence $\mathcal{B}_i$ as the variety of 
pairs of stable modules $((\BB,\II,\JJ),(B,I,J))$ quotient by the $G_V$ such that 
$(\BB,\II,\JJ)$ is a framed submodule of $(B,I,J)$ with dimension vector less than 1 in the $i$-th graded piece. 
It can be identified with the closed subvariety in $Z(V^{\prime},V)$ consisting of $([\BB,\II,\JJ],[B,I,J])$ such that there exists injective map $ (\BB,\II,\JJ)\xrightarrow[]{\xi}(B,I,J)$.
In \cite{nakajima1998quiver}, it is showed that $\mathcal{B}_i$ is one of the irreducible components of $Z(V^{\prime},V)$.
It is also proved that there is map
\begin{align}
 \mathcal{U}(\mathfrak{g})\xrightarrow{}  H^{top}_{*}(Z_W),
\end{align}
which sends Chevalley generators $E_i$ to the homology class of the
Hecke correspondences $\mathcal{B}_i$.

We now bring everything to $\theta$ case.

\begin{theorem}\cite[prop 3.19]{henderson2014diagram}
The diagram commutes\\
\centerline{
\xymatrix{ \bigsqcup\cM(V^{\prime},W^{\prime}) \ar[r]^{\psi} \ar[d]^{\pi^{\prime}} & \cM(V,W)^{\theta}\ar[d]^{\pi^{\theta}} \\ \bigsqcup\cM_{0}(V^{\prime},W^{\prime}) \ar[r]^{\psi_{0}} & \cM_{0}(V,W)^{\theta}.
}}
\end{theorem}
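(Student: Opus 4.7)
The plan is to trace the commutative square through a single underlying morphism of raw representation spaces and then invoke functoriality of the two quotient constructions.

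The isomorphism $\psi$ of Theorem 1 descends from an explicit $G_{V^{\prime}}$-equivariant morphism $\tilde\psi\colon \bigsqcup M(V^{\prime},W^{\prime})\to M(V,W)^{\theta}$ of affine representation varieties, in which $G_{V^{\prime}}$ is identified with the $\theta$-fixed subgroup $G_V^{\theta}\subset G_V$ via the decomposition $p(V^{\prime})=V$ together with the eigenspace decomposition of $W$; the formulas $\theta(B_{a(h)})=B_h$, $\theta(J_{a(i)})=\sigma_i J_i$, $\theta(I_{a(i)})=I_i\sigma_i$ recalled just before Theorem 1 make this morphism polynomial in the matrix entries. I would then define $\psi_0$ as the descent of the same $\tilde\psi$ to the categorical quotients. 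Its image lies inside $\cM_0(V,W)^{\theta}$ because $\tilde\psi$ factors through the $\theta$-invariant locus $M(V,W)^{\theta}$ and taking invariants of a reductive group action is compatible with restriction to the fixed locus of the finite cyclic group $\langle\theta\rangle$.

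With this setup in hand, commutativity of the square is essentially tautological. Both $\pi^{\prime}$ and $\pi^{\theta}$ are instances of the canonical map from the geometric quotient of the stable locus to the corresponding categorical quotient (for $\pi^{\theta}$, further restricted to the $\theta$-fixed locus). Hence for any $x=[(\BB,\II,\JJ)]\in\bigsqcup\cM(V^{\prime},W^{\prime})$ both $\pi^{\theta}(\psi(x))$ and $\psi_0(\pi^{\prime}(x))$ are represented by the same underlying point $\tilde\psi(\BB,\II,\JJ)\in M(V,W)^{\theta}$, hence they coincide in $\cM_0(V,W)^{\theta}$.

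The main obstacle is justifying that $\tilde\psi$ really does descend cleanly to the GIT quotient, i.e.\ that it carries closed $G_{V^{\prime}}$-orbits to closed $G_V$-orbits inside $M(V,W)^{\theta}$. Under the equivalence of categories between $\Pi_{s(Q)}$-modules and $\langle\theta\rangle$-equivariant $\Pi_{fQ}$-modules implicit in \cite{henderson2014diagram}, closed orbits on both sides correspond to semisimple modules, and a $\langle\theta\rangle$-equivariant module is semisimple if and only if its underlying $\Pi_{fQ}$-module is semisimple, since one can average invariant subobjects over the finite group $\langle\theta\rangle$. Once this compatibility of semisimplification with $\tilde\psi$ is in place, the commutativity follows immediately from the description of $\pi$ as sending a stable module to its semisimplification.
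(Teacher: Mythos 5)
Your proposal is correct in outline and its skeleton actually matches the paper's: both arguments reduce commutativity to the single claim that $\psi$ intertwines semisimplification, i.e.\ that a semisimple $\Pi_{s(Q)}$-module is sent to a semisimple $\Pi_{fQ}$-module with the matching composition factors, so that $\tilde\psi(\mathrm{ss}(M))$ and $\mathrm{ss}(\tilde\psi(M))$ lie in the same closed orbit. Where you diverge is in how this crux is established. The paper argues by hand: it reduces to the claim that a simple $M$ has simple image, writes out $\psi(M)$ explicitly near a branching vertex, and shows by the $(B^{+},B^{-})$ computation that any submodule of $\psi(M)$ can be symmetrized (replacing $V^{\prime}$ and $V^{\prime\prime}$ by $V^{\prime}\cap V^{\prime\prime}$) into one that descends from a submodule of $M$. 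You instead invoke the equivalence between $\Pi_{s(Q)}$-modules and $\langle\theta\rangle$-equivariant $\Pi_{fQ}$-modules plus Clifford theory for the finite group. Your route is more conceptual and more uniform (it covers all branching patterns and all orders of $a$ at once, whereas the paper only writes out $m=2$), but it outsources precisely the content the paper verifies concretely, so the equivalence of abelian categories should be proved or cited precisely rather than called ``implicit.'' Two smaller caveats. First, the direction of the semisimplicity transfer you actually need is ``semisimple as an equivariant (equivalently, $s(Q)$-) module $\Rightarrow$ semisimple as a plain $\Pi_{fQ}$-module''; the averaging argument you give proves the converse implication. The needed direction follows instead from noting that the socle of the underlying module is stable under the equivariant structure map, hence is an equivariant submodule. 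Second, the image of $\tilde\psi$ is not the literal $\theta$-fixed locus of $M(V,W)$ but the locus of modules $M$ with $\theta(M)=gM$ for the block-scalar $g$ recording the eigenvalue data (this is visible in the paper's later computations); this is harmless after passing to the quotients, but your notation $M(V,W)^{\theta}$ should be understood in that sense.
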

\begin{proof}
We give a proof for later use.
Take $M\in \cM(V^{\prime},W^{\prime})$, the image $\pi^{\prime}(M)$ is the semisimplification of $M$ which means that it has the same simple factor as $M$ and $\pi^{\prime}(M)$ is semisimple.
We have $\psi(M)$ and $\psi_{0}(\pi^{\prime}(M))$ have the simple factors. We need to prove that $\psi_{0}(\pi^{\prime}(M))$ is semisimple.
It suffices to prove that if $M$ is simple, $\psi_{0}(M)$ is also simple.
Without loss of generality, suppose $\psi(M)$ is the following module\\$$
\xymatrixcolsep{10pc}\xymatrix{
W \ar@<.5ex>[d]^{I} & W_{+}\oplus W_{-} \ar[d]^{\begin{psmallmatrix}
I^{+}&0 \\
0&I^{-} \end{psmallmatrix}} & W\ar@<.5ex>[d]^{I}\\
V \ar@<.5ex>[u]^{J} \ar@<.5ex>[r]^{\begin{psmallmatrix}
A^{+}\\A^{-}
\end{psmallmatrix} 
} & V_{+}\oplus V_{-} \ar@<0.5ex>[l]^{ 
(B^{+},B^{-})} \ar@<0.5ex>[u]^{\begin{psmallmatrix}
2J^{+}&0 \\
0&2J^{-} 
\end{psmallmatrix} 
}\ar[r]^{(B^{+},-B^{-})}  & V\ar@<0.5ex> [l]^{\begin{psmallmatrix}
A^{+}\\-A^{-}
\end{psmallmatrix}} \ar@<0.5ex>[u]^{J}.}$$
If it is not simple, there exists a module\\
$$\xymatrixcolsep{10pc}\xymatrix{
W \ar@<.5ex>[d]^{I} & W_{+}\oplus W_{-} \ar[d]^{\begin{psmallmatrix}
I^{+}&0 \\
0&I^{-} \end{psmallmatrix}} & W\ar@<.5ex>[d]^{I}\\
V^{\prime} \ar@<.5ex>[u]^{J} \ar@<.5ex>[r]^{\begin{psmallmatrix}
A^{+}\\A^{-}
\end{psmallmatrix} 
} & V_{+}^{\prime}\oplus V_{-}^{\prime} \ar@<0.5ex>[l]^{ 
(B^{+},B^{-})} \ar@<0.5ex>[u]^{\begin{psmallmatrix}
2J^{+}&0 \\
0&2J^{-} 
\end{psmallmatrix} 
}\ar[r]^{(B^{+},-B^{-})}  & V^{\prime \prime}\ar@<0.5ex> [l]^{\begin{psmallmatrix}
A^{+}\\-A^{-}
\end{psmallmatrix}} \ar@<0.5ex>[u]^{J}.}$$
Take $v\in V_{+}^{\prime}$, by LHS of the quiver we have $B^{+}v\in V^{\prime}$ and by RHS we have $B^{+}v\in V^{\prime \prime}$, so $B^{+}v\in V^{\prime}\cap V^{\prime \prime}$.
Similarly for any $v\in V_{-}^{\prime}$, we have $B^{-}v\in V^{\prime}\cap V^{\prime \prime}$.
So for any $v\in V^{\prime}_{+}\oplus V^{\prime}_{-}, (B^{+},B^{-})v \in V^{\prime}\cap V^{\prime \prime}$.
So if we replace $V^{\prime}$ and ${V^{\prime\prime}}$ by 
$V^{\prime}\cap V^{\prime \prime}$ it is still a submodule. It is an image of some module $M^{\prime}$ under $\psi_{0}$. Since $M^{\prime}$ is a submodule of simple module $M$, hence $\psi_{0}(M)$ is simple. Therefore the image under $\pi^{\theta}$ of $\cM(V,W)^{\theta}$ is in $\cM_{0}(V,W)^{\theta}$.

\end{proof}
Denote the image of $\pi^{\theta}$ by $\cM_{1}(V,W)^{\theta}$.

\begin{theorem}
The map $\cM(V,W)^{\theta}\xrightarrow[]{\pi^{\theta}} \cM_0(V,W)^{\theta}$ is not surjective.

\end{theorem}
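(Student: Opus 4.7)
The plan is to produce an explicit counterexample: a dimension pair $(V,W)$ together with a point of $\cM_0(V,W)^{\theta}$ that is not in the image of $\pi^{\theta}$. The most economical choice takes $W=0$ and any nonzero $a$-invariant dimension vector $\mathbf{v}$, that is, $\mathbf{v}\neq 0$ with $\mathbf{v}_i=\mathbf{v}_{a(i)}$ for all $i$.

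First I would check that $\cM(V,0)^{\theta}$ is empty by showing that $\cM(V,0)$ itself is empty. With $W=0$ the framing map $J$ vanishes, so $\ker J=V$. Stability requires that $V$ contain no nonzero $k\overline{Q}$-submodule inside $\ker J$, but $V\subseteq V$ is such a submodule whenever $V\neq 0$. Hence $M^{s}(V,0)=\emptyset$, and its $\theta$-fixed locus is a fortiori empty.

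Next I would exhibit a point of $\cM_0(V,0)^{\theta}$. The tuple $(B=0,I=0,J=0)$ belongs to $M(V,0)$ since the preprojective relations collapse to $0=0$, and its $G_V$-orbit is a single closed point; it therefore represents a point of the GIT quotient $\cM_0(V,0)$. For any admissible choice of the $\sigma_i$ the induced $\theta$ sends zero maps to zero maps, so this orbit is trivially $\theta$-fixed.

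Combining the two observations, $\pi^{\theta}$ has empty source while its target contains at least one point, and hence cannot be surjective. The argument is elementary; the real content of the statement is that the $\theta$-fixed locus of the GIT quotient contains orbits—such as the zero orbit—that cannot be lifted to the stable locus, which is what motivates passing to the image $\cM_1(V,W)^{\theta}$ defined just above the theorem. The only potential subtlety, and essentially the only thing to double-check, is that being $\theta$-fixed in $\cM_0(V,W)$ means having a $\theta$-stable $G_V$-orbit rather than fixing a specific representative; for the zero orbit both interpretations agree trivially, so no genuine obstacle arises.
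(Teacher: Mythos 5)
Your argument is internally consistent: for $W=0$ and $V\neq 0$ the stable locus $M^{s}(V,0)$ is empty (any nonzero $V$ is a $k\overline{Q}$-submodule killed by the map to $W=0$), while $\cM_0(V,0)$ still contains the closed orbit of the zero module, which is trivially $\theta$-fixed. So the literal sentence ``$\pi^{\theta}$ is not surjective'' is verified for that degenerate choice of $(V,W)$. Nevertheless I would count this as a gap rather than an alternative proof, for two reasons. First, the framing $W$ is fixed throughout the surrounding discussion (it carries the representation-theoretic content, and the theorem sits inside the construction of $Z(V_1,V_2)^{\theta}$ for that $W$); a proof that only works by setting $W=0$ does not establish the statement in the regime where it is used. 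Second, and more importantly, the failure you exhibit has nothing to do with $\theta$: already $\pi\colon\cM(V,0)\to\cM_0(V,0)$ is non-surjective because its \emph{source} is empty, so passing to fixed loci adds no information. The purpose of the theorem is to show that $\cM_0(V,W)^{\theta}$ genuinely contains more than the image of $\pi^{\theta}$ in nondegenerate situations, which is what forces the paper to introduce $\cM_1(V,W)^{\theta}=\mathrm{im}(\pi^{\theta})$ before forming the fiber products; your example cannot detect that phenomenon.

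The paper's proof isolates the actual obstruction. It takes a simple module $M_1$ that is not $\theta$-stable, forms the semisimple (hence closed-orbit) module $M=M_1\oplus g\theta M_1$ for an arbitrary $g$, and checks that $M$ is $\theta$-stable with block transition matrix $\operatorname{diag}(g^{*},g^{-1})$, where $g^{*}_i=g_{a(i)}$. Since $M$ is simple-multiplicity-free the transition matrix is essentially unique up to conjugation, and any point in the image of $\pi^{\theta}$ (equivalently of $\psi_0$) admits a representative whose transition matrix at an $a$-fixed vertex with $e_i=2$ has eigenvalues $\pm 1$ only. Choosing $g_i$ with other eigenvalues therefore produces a $\theta$-fixed point of $\cM_0(V,W)$ outside the image of $\pi^{\theta}$, with $W$ arbitrary. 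This eigenvalue constraint on transition matrices is the content of the theorem; to repair your write-up you would need an argument of this kind, or at least a non-liftable $\theta$-fixed point exhibited for some nonzero $W$.
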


\begin{proof}

We say $M$ is $\theta$-stable if $M$ and $\theta(M)$ is in the same $G_V$-orbit, which means $\theta(M)=gM$ for some $g\in G_V$. We will call $g$ the transition matrix of $M$, though it is a product of matrices $g_i \in GL(V_i),\  i\in I$.
Take any simple module $M_1$ that is not $\theta$-stable.
For any $M$ of dimension $(\mathbf{v},\mathbf{w})$, $\theta(M)$ is of dimension $(v_{a(i)})_{i\in I},(w_{a(i)})_{i\in I}$.
Take any $g\in \Pi GL(V_{a(i)})$ and 
consider a module $M=M_1\oplus g\theta M_1$.
We have $\theta(M)=\theta(M_1)\oplus \theta(g\theta(M_1))=\theta(g\theta(M_1))\oplus \theta(M_1)$.

We want to show that $M$ is $\theta$-stable.
We show that for any $g\in \Pi GL(V_{a(i)})$,
there exists some $g^{*}\in \Pi GL(V_i)$,
such that $\theta g\theta= g^{*}.$
This is direct computation as follows. The $(I,J)$ part of $M$ is trivial so we only compute $B$ part.
The arrow between $i$ and $i+1$ is
\xymatrix{
V_i \ar[r]^{B_{i,i+1}} &V_{i+1}
},
after applying $\theta$, it is 
\xymatrixcolsep{5pc}\xymatrix{
V_{a^{-1}(i)} \ar[r]^{B_{a^{-1}(i),a^{-1}(i+1)}}& V_{a^{-1}(i+1)}
}.
Note this is still between vertex $i$ and $i+1$. After applying $g$, \\it is
\xymatrixcolsep{10pc}\xymatrix{
V_{a^{-1}(i)} \ar[r]^{(g_{i+1}) B_{a^{-1}(i),a^{-1}(i+1)}(g_{i})^{-1}}& V_{a^{-1}(i+1)}
} and after applying $\theta$, what we get between position $i$ and $i+1$ is what previously between $a(i)$ and $a(i+1)$, which is
\xymatrixcolsep{10pc}\xymatrix{
V_{i} \ar[r]^{(g_{a(i+1)}) B_{i,i+1}(g_{a(i)})^{-1}}& V_{i+1}
}.
From the above computation, we see that  $g^{*}_{i}=g_{a(i)}$.
Hence $$\theta(M)=g^{*} M_1 \oplus \theta(M_1)=\begin{pmatrix} g^{*}&\\& g^{-1} \end{pmatrix} (M_1,g\theta M_1)=\begin{pmatrix} g^{*}&\\& g^{-1} \end{pmatrix} (M).$$
This means $M$ is $\theta$-stable and the transition matrix is $\begin{pmatrix} g^{*}&\\& g^{-1} \end{pmatrix}$ and since $M$ is simple the transition matrix is unique at the block diagonal part.
If we choose a representative $h(M), h\in G_V$ in the $G_V$-orbit of $M$,
$\theta (h(M))= h^{*}\theta(M)=h^{*} g(M)=
h^{*} g h^{-1} (h(M)).$
Without loss of generality, we assume there is a $a$-fixed vertex $i$ with $e_i=2$.
This means the transition matrix $g^{\prime}$ of $h(M)$ is $h^{*} g h^{-1}$, in particular,
on vertex $i$, the transition matrix $g^{\prime}_{i}$ is a conjugation of $g_{i}$.
We know that the module in the image of $\Psi_{0}$ has a representative such that the transition matrix at the fixed vertex $i$ is $\begin{pmatrix} I{+}&\\& -I_{-} \end{pmatrix}$, so if we choose $g_i$ not having eigenvalues all being 1 or -1, $M$ can not be a representative of the image of $\pi^{\theta}$.
\end{proof}
Although $\pi^{\theta}$ is not surjective,
the fiber product $$\cM(V,W)^{\theta}\times_{\cM_{1}(V,W)^{\theta}} \cM(V,W)^{\theta}$$ is the same as $$\cM(V,W)^{\theta} \times_{\cM_{0}(V,W)^{\theta}} \cM(V,W)^{\theta}.$$

Moreover we have an proper inclusion $$\bigsqcup \limits_{p(V^{\prime}_{i})=V_i,\  i=1,2} \cM(V_{1}^{\prime},W^{\prime})\times_{\cM_{0}(V_{1}^{\prime}+V_{2}^{\prime},W^{\prime})}  \cM(V_{2}^{\prime},W^{\prime}) \hookrightarrow
 \cM(V_{1},W)^{\theta}\times_{\cM_{1}(V_{1}+V_{2},W)^{\theta} }
 \cM(V_{2},W)^{\theta}.$$
 We denote the latter by $Z(V_1,V_2)^{\theta}$ and the inclusion by $\Pi_{V_1,V_2}$.
Then
 \begin{align}
 Z_{W^{\prime}}=\bigsqcup \limits_{V_{1}^{\prime},V_{2}^{\prime}} Z(V_{1}^{\prime},V_{2}^{\prime})\cong\\\bigsqcup \limits_{V_{1},V_{2}}
 \bigsqcup \limits_{V_{1}^{\prime},V_{2}^{\prime}|p(V_{1}^{\prime})= V_1,p(V_{2}^{\prime})=V_2} Z(V_{1}^{\prime},V_{2}^{\prime})\xrightarrow[]{\Pi_{V_1,V_2}}\\
\bigsqcup_{V_1,V_2} Z(V_1,V_2)^{\theta}.
\end{align}
 
 The first one is what we already understood. But 
 we could ask what is the subalgebra generated by Hecke correspondences in $H_{*}(\bigsqcup Z(V_1,V_2)^{\theta})$.
 
 \section {$\theta$-Hecke correspondence}
 Define $\mathcal{B}^{\prime}_i$ be the variety of pairs of stable and $\theta$-stable modules $((\BB,\II,\JJ
 ),(B,I,J))$ quotient by $G_V$ such that $(\BB,\II,\JJ)$ 
is a framed submodule of $(B,I,J)$ whose dimension is less than 1 in all $j$-th graded pieces, where $j$ is in $\langle a \rangle$-orbit of $i$. It can be identified with a closed subvariety in $Z(V_1,V_2)^{\theta}$, which we will refer to $\theta$-Hecke correspondence.

\begin{theorem}
The $\theta$-Hecke correspondence $\mathcal{B}_i^{\prime}$ of $Q$ is isomorphic to the disjoint union of Hecke correspondences $\mathcal{B}_j$ of $s(Q)$, where $j$
runs over all vertices that $i$ splits into.

\end{theorem}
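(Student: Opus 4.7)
The plan is to apply the isomorphism $\phi$ from Theorem 1 componentwise to pairs of $\theta$-stable modules. Since $\phi \times \phi$ identifies the disjoint union $\bigsqcup_{\mathbf{v}_1^{\prime}, \mathbf{v}_2^{\prime}} \cM(V_1^{\prime},W^{\prime}) \times \cM(V_2^{\prime},W^{\prime})$ (the union running over dimension vectors with $p(\mathbf{v}_k^{\prime}) = \mathbf{v}_k$) with $\cM(V_1,W)^{\theta} \times \cM(V_2,W)^{\theta}$, it suffices to check on the level of $G_V$-orbits that $\mathcal{B}_i^{\prime}$ matches $\bigsqcup_j \mathcal{B}_j$, where $j$ runs over the $e_i$ vertices $(\bar{i}, k/e_i)$ of $s(Q)$ lying above $i$.

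For the forward direction, take a pair representing a point of $\mathcal{B}_j$ at $j = (\bar{i}, k_0/e_i)$, given by a framed inclusion $M_2^{\prime} \hookrightarrow M_1^{\prime}$ of stable $s(Q)$-modules with dimension vector dropping by $1$ at $j$ and unchanged elsewhere. Since $\phi$ assembles each $V_i$ as $\bigoplus_{k=1}^{e_i} V^{\prime}_{(\bar{i}, k/e_i)}$, the $s(Q)$-inclusion canonically induces an inclusion $\phi(M_2^{\prime}) \hookrightarrow \phi(M_1^{\prime})$ of $\Pi_{fQ}$-modules. The identity $\dim V_{1,i} - \dim V_{2,i} = \sum_{k=1}^{e_i} (\dim V^{\prime}_{1,(\bar{i}, k/e_i)} - \dim V^{\prime}_{2,(\bar{i}, k/e_i)}) = 1$, together with the analogous identity at each vertex of the $\langle a \rangle$-orbit of $i$ (which shares the same set of summands above $\bar{i}$), places the resulting pair in $\mathcal{B}_i^{\prime}$.

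For the inverse direction, start with a pair $(N_2, N_1) \in \mathcal{B}_i^{\prime}$ realized by a framed inclusion $\xi: N_2 \hookrightarrow N_1$. Both $N_1$ and $N_2$ are stable and $\theta$-stable, so each carries a $\theta$-action realizing $\theta$-stability, essentially unique by the trivial stabilizer of a stable module; these actions can be arranged to be intertwined by $\xi$. The eigenspace decomposition underlying $\phi^{-1}$ on $N_1$ then restricts to the corresponding decomposition on $N_2$, yielding a framed inclusion $M_2^{\prime} \hookrightarrow M_1^{\prime}$ on the $s(Q)$ side. The dimension accounting from the forward step, read backwards, forces exactly one summand above $\bar{i}$ to drop by $1$ and all others to be unchanged, placing the pair in $\mathcal{B}_j$ for a unique $j$ above $i$.

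The main obstacle will be the compatibility step in the inverse direction: ensuring that $\xi$ intertwines suitably chosen $\theta$-actions on $N_1$ and $N_2$, so that $\phi^{-1}$ applies consistently to both ends of the inclusion. This rests on the uniqueness of the $\theta$-action realizing $\theta$-stability on a stable module, together with the functoriality of the eigenspace decomposition of Theorem 1. Once this compatibility is established, the remaining content is essentially dimension bookkeeping across the identification $V_i = \bigoplus_k V^{\prime}_{(\bar{i}, k/e_i)}$.
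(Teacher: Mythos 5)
You have the right overall strategy, and you have correctly isolated the crux: in the inverse direction one must show that the eigenspace decomposition defining $\phi^{-1}$ on the ambient module restricts to the one on the submodule. But that is exactly the point where your argument stops being a proof. You assert that the $\theta$-actions on $N_1$ and $N_2$ ``can be arranged to be intertwined by $\xi$,'' and justify this by the uniqueness of the transition matrix on each stable module separately plus ``functoriality.'' Uniqueness of $g$ on $N_1$ and of $g^{\prime}$ on $N_2$ individually does not give any relation between them across $\xi$: a priori $g$ need not even preserve the image of $\xi$, and $\xi$ need not send $\ker(g^{\prime}-\lambda)$ into $\ker(g-\lambda)$. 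To close this you would need an additional input, e.g.\ Nakajima's lemma that a framed homomorphism between two stable modules is unique (so that $g^{-1}\circ\theta(\xi)\circ g^{\prime}$ must coincide with $\xi$), which you neither state nor prove.

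The paper handles precisely this step by a separate theorem (its Theorem 5) with a direct computation rather than a formal compatibility argument: writing an eigenvector $v$ of the submodule's transition matrix as $v=v_1+v_2$ with respect to the ambient eigenspace decomposition $V_+\oplus V_-$, it shows that the ``wrong'' component $v_2$ generates a $\bar Q$-submodule contained in $\ker J$ (using the explicit block form of $\phi(M)$ and the relation $hw-w\in\ker J$), so $v_2=0$ by the stability condition. Stability of the modules is thus used in an essential, non-formal way, and nothing in your proposal substitutes for it. The forward direction and the dimension bookkeeping in your write-up are fine; the gap is entirely in the compatibility step you flagged as ``the main obstacle'' but did not resolve.
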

\begin{proof}
This follows from the next theorem.
\end{proof}
 
\begin{theorem}
Given a pair of stable and $\theta$-stable module $M^{\prime}\subset M$ such that $\theta(M)=gM$ for $g \in G_V$ and $\theta(M^{\prime})=g^{\prime}M^{\prime}$ for $g^{\prime}\in G_{V}^{\prime}$, where $g$ and $g^{\prime}$ are the transition matrices of $M$ and $M^{\prime}$ . For any $i\in I$, the eigenspace of $g^{\prime}$ on $M^{\prime}$ is a subspace of the eigenspace of $g$ on $M$ of the same eigenvalue.
\end{theorem}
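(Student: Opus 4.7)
The plan is to prove the stronger statement that $g_i|_{V'_i} = g'_i$ as linear operators on $V'_i$ for every vertex $i \in I$, from which the eigenspace inclusion is immediate: the $\lambda$-eigenspace of $g'_i$ on $V'_i$ equals $V'_i \cap \ker(g_i - \lambda \cdot \mathrm{id})$, which sits inside the $\lambda$-eigenspace of $g_i$ on $V_i$.

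First, I would exploit the explicit formula $\theta(J_{a(i)}) = \sigma_i J_i$ together with $\theta(M) = g M$, which forces the transition matrix to satisfy a relation of the form $g_i \circ J_i = J_{a^{-1}(i)} \circ \sigma_{a^{-1}(i)}^{\pm 1}$ (the precise sign convention is inessential for the argument). Since $M' \subset M$ is a framed submodule, the framing map $J^{M'}$ coincides with $J$ and its image lies inside $V'$; applying the same relation to $g'$ yields $g'_i \circ J_i = g_i \circ J_i$, so the two operators already agree on $\mathrm{Im}(J_i) \subset V'_i$.

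Next, I would package the difference $\alpha_i := g_i|_{V'_i} - g'_i : V'_i \to V_i$ as a morphism of $k\overline{Q}$-representations from $M'$ (with framing forgotten) to $\theta(M)$. From $\theta(B_{a(h)}) = B_h$ combined with $\theta(M) = g M$, one derives the intertwining $g_{t(h)} B_h = B_{a^{-1}(h)} g_{s(h)}$, and the analogous relation for $g'$ on $V'$; subtracting yields $\alpha_{t(h)} B_h = B_{a^{-1}(h)} \alpha_{s(h)}$ on $V'_{s(h)}$, so $\ker \alpha \subset V'$ is a $k\overline{Q}$-subrepresentation of $M'$.

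Combining the two steps, we have $\mathrm{Im}(J^{M'}) \subset \ker \alpha$. The stability of $M'$, in the standard form that $V'$ is the smallest $k\overline{Q}$-invariant subspace containing $\mathrm{Im}(J^{M'})$ (the same input that produces trivial stabilizers for stable modules), then forces $\ker \alpha = V'$, so $\alpha \equiv 0$ and $g|_{V'} = g'$. The main obstacle I anticipate is not conceptual but notational: pinning down the precise form of the transition relations on $I$, $J$ and $B$ in this framed $\theta$-equivariant setting so that the first step really gives $g_i J_i = g'_i J_i$ (with the $\sigma$-factors cancelling correctly), and so that the intertwining in the second step is indeed with the edge $B_{a^{-1}(h)}$ in the target $\theta(M)$ and not with $B_h$ itself.
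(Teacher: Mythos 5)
Your overall strategy --- proving the stronger statement $g|_{V'}=g'$ by showing the difference $\alpha:=g|_{V'}-g'$ vanishes --- is sound and is genuinely different from the paper's proof, which instead puts $M$ into the Henderson--Licata normal form at a branching vertex and argues eigenvector by eigenvector (writing $v=v_1+v_2$ and showing the component in the wrong summand generates a $k\overline{Q}$-submodule of $\ker J$ of the ambient module). Your steps 1 and 2 are correct: the framing relations for $M$ and for the framed submodule $M'$ do force $g$ and $g'$ to agree on the image of the map $W\to V'$, and the intertwining $\alpha_{t(h)}B_h=B_{a^{-1}(h)}\alpha_{s(h)}$ does hold on $V'$, so $\ker\alpha$ is a $k\overline{Q}$-subrepresentation of $M'$.

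Step 3, however, has a genuine gap: you invoke stability in the form ``$V'$ is the smallest $k\overline{Q}$-invariant subspace containing the image of the framing map $W\to V'$.'' That is the costability condition of the opposite GIT chamber, not the condition this paper (following Nakajima) actually imposes, namely that no nonzero $k\overline{Q}$-submodule of $V$ lies in the kernel of the framing map $V\to W$. The two are inequivalent on the preprojective locus: already for the quiver with one vertex and $\dim V=\dim W=1$, the relation $IJ=0$ forces every stable point to have vanishing map $W\to V$, so $V$ is not generated by that image. Hence from ``$\ker\alpha$ is a $k\overline{Q}$-submodule containing $\mathrm{Im}(W\to V')$'' you cannot conclude $\ker\alpha=V'$. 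The repair is to dualize, exactly as in the trivial-stabilizer lemma you allude to (whose proof also goes through an image, not a kernel): comparing the $V\to W$ framing relations $\sigma_i J_i\, g_{a(i)}=J_{a(i)}$ and $\sigma_i J_i\, g'_{a(i)}=J_{a(i)}|_{V'}$ gives $J_i\circ\alpha_{a(i)}=0$, so $\mathrm{Im}(\alpha)$ is a $k\overline{Q}$-submodule of $\theta(M)$ contained in the kernel of its framing map to $W$; since $\theta(M)\cong M$ is stable in the paper's sense, $\mathrm{Im}(\alpha)=0$ and $\alpha=0$. With that substitution your argument closes, proves the stronger equality $g|_{V'}=g'$ uniformly, and avoids the paper's reduction to a normal form and its case analysis.
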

\begin{proof}
We could always assume $M=(B,I,J)$ is of the form of $\phi(M)$, note this $\phi$ is not the map on $\cM(V,W)$ but on $M(V,W)$. See \cite[3.23]{henderson2014diagram} for details. 

Since $V_i\xrightarrow[]{B_{ij}} V_j$ maps eigenspace of $V_i$ under $g_i$  to eigenspace of  $V_j$ under $g_j$, we can reduce the theorem to the vertex that has branching, meaning there exist changes of size on its adjacent orbits.
We see that $(B,I,J)$ is block matrices so we can reduce the theorem to the case where one vertex branches into $m$ vertices.

For brevity of notation, we assume that $m=2$, and the configuration is as below. 
\xymatrixcolsep{10pc}\xymatrix{
W \ar@<.5ex>[d]^{I} & W_{+}\oplus W_{-} \ar[d]^{\begin{psmallmatrix}
I^{+}&0 \\
0&I^{-} \end{psmallmatrix}} & W\ar@<.5ex>[d]^{I}\\
V \ar@<.5ex>[u]^{J} \ar@<.5ex>[r]^{\begin{psmallmatrix}
A^{+}\\A^{-}
\end{psmallmatrix} 
} & V_{+}\oplus V_{-} \ar@<0.5ex>[l]^{ 
(B^{+},B^{-})} \ar@<0.5ex>[u]^{\begin{psmallmatrix}
2J^{+}&0 \\
0&2J^{-} 
\end{psmallmatrix} 
}\ar[r]^{(B^{+},-B^{-})}  & V\ar@<0.5ex> [l]^{\begin{psmallmatrix}
A^{+}\\-A^{-}
\end{psmallmatrix}} \ar@<0.5ex>[u]^{J}}

By \cite[3.23]{henderson2014diagram}, we can assume the module $(B,I,J)$ is given in the above form.
Suppose we have a submodule $(\BB,\II,\JJ)$.
%This means we have a subspace $V^{\prime}\subset V_{+}\oplus V_{-}$ that contains the image of all the maps into it.
Since it is $\theta$-stable, there exists some $(h,g,f)$ such that $\theta(\BB,\II,\JJ)=(h,g,f)(\BB,\II,\JJ)$. This implies $(hf,g^2,hf)(\BB,\II,\JJ)=(\BB,\II,\JJ)$ and since $(\BB,\II,\JJ)$ is stable we have $g^{2}=id, f=h^{-1}$. 
So $V^{\prime}$ is spanned by all eigenvectors of eigenvalue 1 and -1.
We take any eigenvector $v$ of eigenvalue 1, so $gv=v$ and have 
$$\begin{psmallmatrix}
2J^{+}&0 \\
0&2J^{-} 
\end{psmallmatrix} v=\begin{psmallmatrix}
2J^{+}&0 \\
0&-2J^{-} 
\end{psmallmatrix} gv.$$

We write $v=v_1+v_2$, where $v_1\in V_{+}$ and $v_2\in V_{-}.$
we have $J^{+}v_1+J^{-}v_2=J^{+}v_1-J^{-}v_2$ so $J^{-}v_2$=0.

We have also
\begin{equation} \label{eq1}
\begin{split}
h(B^{+},B^{-})v=(B^{+},-B^{-})gv. \\
 hB^{+}v_1+hB^{-}v_2=B^{+}v_1-B^{-}v_2\\
 hB^{+}v_1-B^{+}v_1=-hB^{-}v_2-B^{-}v_2     .
\end{split}
\end{equation}
Moreover $Jh=J$.
For any vector $w\in V$, $Jhw=Jw$, so $hw-w\in kerJ$.
We take $w$ be the image of $v$ under 
$$\begin{psmallmatrix} 
B^{+} \\
B^{-} \end{psmallmatrix},$$
i.e $w=B^{+}v_1+B^{-}v_2.$
Compute
\begin{align}
hw-w=hB^{+}v_1+hB^{-}v_2-B^{+}v_1-B^{-}v_2\\
=hB^{+}v_1-B^{+}v_1+hB^{-}v_2-B^{-}v_2
\\
=-hB^{-}v_2-B^{-}v_2+hB^{-}v_2-B^{-}v_2\\
=-2B^{-}v_2,
\end{align}
where $(7)\Rightarrow(8)$ is by $(5)$.
So $B^{-}v_2\in kerJ$ and
$(B^{-}v_2, v_2, B^{-}v_2)\subset (V,V_{+}\oplus V_{-},V)$ forms an submodule which is contained in $kerJ$. By stability condition, it is zero.
So $v=v_1\in V_{+}$. Similarly, if $v\in V^{\prime}$ is an eigenvector of eigenvalue -1, we have $v=v_2\in V_{-}$.

The general case is similar, one only need to replace the $(+,-)$ by $m$-roots of unity and for more vertices on the left or right, we just follow the arrows to the end to generate the submodule.

 \end{proof}

\begin{theorem}
There is a map  $$\mathcal{U}(\mathfrak{g}_{s(Q)}^{a^{\prime}})\xrightarrow[]{} H_{*}(Z^{\theta}_{W}),$$
that takes generators $e_{i}^{\prime}$ to the class of $\mathcal{B}^{\prime}_i$.
\end{theorem}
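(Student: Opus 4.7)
The plan is to construct the desired homomorphism as the composition
\[
\mathcal{U}(\mathfrak{g}_{s(Q)}^{a'}) \hookrightarrow \mathcal{U}(\mathfrak{g}_{s(Q)}) \xrightarrow{\Phi} H_*(Z_{W'}) \xrightarrow{\Pi_*} H_*(Z_W^\theta),
\]
where $\Phi$ is the classical Nakajima homomorphism (1) for the split quotient quiver $s(Q)$, sending each Chevalley generator $e_j$ (for $j \in I'$) to $[\mathcal{B}_j]$, and $\Pi_*$ is the pushforward along the proper inclusion $\Pi_{V_1,V_2}$ built in the previous section.

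The key technical task is to verify that $\Pi_*$ is multiplicative under convolution. My strategy is to upgrade the proper inclusion $\Pi$ to an identification of $Z_{W'}$ with a disjoint union of connected components of $Z_W^\theta$. On the top of the fiber product, Theorem 1 already gives $\cM(V_i,W)^\theta = \bigsqcup_{p(V_i')=V_i}\cM(V_i',W')$; what is needed is the analogous statement that $\psi_0$ restricts to an isomorphism $\bigsqcup_{p(V')=V_1+V_2}\cM_0(V',W') \xrightarrow{\sim} \cM_1(V_1+V_2,W)^\theta$. Surjectivity is immediate from the commuting square of Theorem 2 together with surjectivity of $\pi'$; injectivity, which is the main obstacle, requires showing that distinct semisimple modules lying in different $\cM_0(V',W')$ yield distinct $\theta$-fixed semisimple modules under $\psi_0$, and in view of Theorem 3 one must be careful to work inside $\cM_1^\theta$ rather than all of $\cM_0^\theta$. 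Once this is established, the fiber product $Z(V_1,V_2)^\theta$ splits as $\bigsqcup_{V_1',V_2'}Z(V_1',V_2')$ by base change, so $\Pi$ becomes a component inclusion and $\Pi_*$ is tautologically multiplicative.

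With $\Pi_*$ known to be an algebra homomorphism, the composition is an algebra map $\mathcal{U}(\mathfrak{g}_{s(Q)}) \to H_*(Z_W^\theta)$ sending $e_j \mapsto \Pi_*[\mathcal{B}_j]$. The automorphism $a'$ acts on $\mathfrak{g}_{s(Q)}$ by permuting Chevalley generators, so the fixed subalgebra $\mathfrak{g}_{s(Q)}^{a'}$ is generated by orbit sums $e_i' = \sum_{j \in \mathcal{O}_i}e_j$, where $\mathcal{O}_i$ is the $a'$-orbit on $I'$ consisting of the vertices into which $i \in I$ splits. By the preceding theorem, $\mathcal{B}_i' \cong \bigsqcup_{j \in \mathcal{O}_i}\mathcal{B}_j$ inside $Z_W^\theta$, hence $\sum_j \Pi_*[\mathcal{B}_j] = [\mathcal{B}_i']$. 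Restricting the composition to $\mathcal{U}(\mathfrak{g}_{s(Q)}^{a'})$ then gives the stated map sending $e_i' \mapsto [\mathcal{B}_i']$.
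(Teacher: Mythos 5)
Your proposal follows essentially the same route as the paper: compose the Nakajima homomorphism $\mathcal{U}(\mathfrak{g}_{s(Q)})\to H_*(Z_{W'})$ with the pushforward along the proper inclusion $\Pi$, restrict to the $a'$-fixed subalgebra whose generators are the orbit sums $e_i'=\sum_j e_j$, and invoke the decomposition $\mathcal{B}_i'\cong\bigsqcup_j\mathcal{B}_j$ to identify the image with $[\mathcal{B}_i']$. The only difference is that you explicitly flag and sketch the verification that $\Pi_*$ intertwines the convolution products (via identifying $Z_{W'}$ with components of $Z_W^\theta$ over $\cM_1^\theta$), a point the paper's proof passes over in silence; this is a welcome refinement rather than a different argument.
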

\begin{proof}
Let $e^{\prime}_i$ maps to  the fundamental class of $\mathcal{B}^{\theta}_{i}$.
Combining (1)-(4), we have
$$\mathcal{U}(\mathfrak{g}_{s(Q)}) \xrightarrow[]{} H_{*}((Z_W)_{s(Q)}) \xrightarrow[]{} H_{*}((Z_{W})^{\theta}_{Q}),$$
where $e_{i}$ maps to the fundamental class of $\mathcal{B}_{i}$. Then by Theorem 4,  
 we have $e^{\prime}_i=e_i+\cdots e_{a^{d_{i}-1}}$.
Similarly for $f_i^{\prime}$, therefore the theorem follows.
\end{proof}

%we could asking the following branching problem naturally arises from the construction.
%We have the total homology  $H_{*}(\bigsqcup_{M^{\prime}} L(M^{\prime},W^{\prime}))$ is $g_{s(Q)}$-module of highest weight $\mathbf{w}^{\prime}$.
%What is it when restrict to $g^{\sigma}_{s(Q)}$-module?
%This is purely an algebraic question but
%this construction suggest us to use this quiver variety basis to compute but not clear what is a closed formula.

\bibliographystyle{alpha}
\bibliography{main.bib}

\end{document}